\renewcommand{\d}{\mathrm{d}}
\newcommand{\ts}{\textstyle }
\newcommand{\bbR}{{\mathbb R}}
\newcommand{\bbE}{{\mathbb E}}
\newcommand{\bbC}{{\mathbb C}}
\newcommand{\bbP}{{\mathbb P}}
\newcommand{\bbZ}{{\mathbb Z}}
\newcommand{\E}{{\mathrm e}}
\newcommand{\iC}{{\mathrm i}}
\newcommand{\phm}{\phantom{-}}
\newcommand{\xb}{\mathbf{x}}
\newcommand{\eb}{\mathbf{e}}
\newcommand{\w}{{\mathchoice{\,{\scriptstyle\wedge}\,}{{\scriptstyle\wedge}}
      {{\scriptscriptstyle\wedge}}{{\scriptscriptstyle\wedge}}}}
\newcommand{\be}{\begin{equation}}
\newcommand{\ee}{\end{equation}}
\newcommand{\bpm}{\begin{pmatrix}}
\newcommand{\epm}{\end{pmatrix}}
\newtheorem{theorem}{Theorem}
\newtheorem{proposition}{Proposition}
\newtheorem{corollary}{Corollary}
\newtheorem{remark}{Remark}
\begin{document}

\author[R. Bryant]{Robert L. Bryant}
\address{Duke University Mathematics Department\\
         P.O. Box 90320\\
         Durham, NC 27708-0320}
\email{\href{mailto:bryant@math.duke.edu}{bryant@math.duke.edu}}
\urladdr{\href{http://www.math.duke.edu/~bryant}%
         {http://www.math.duke.edu/\lower3pt\hbox{\symbol{'176}}bryant}}

\title[Complex Analysis and Weingarten Surfaces]
      {Complex Analysis\\
       and a class of\\
       Weingarten Surfaces}

\date{Spring, 1984}

\begin{abstract}
An idea of Hopf's for applying complex analysis 
to the study of constant mean curvature spheres
is generalized to cover a wider class of spheres,
namely, those satisfying a Weingarten relation of 
a certain type, namely $H = f(H^2{-}K)$ for some 
smooth function~$f$, where~$H$ and~$K$ are the 
mean and Gauss curvatures.

The results are either not new or are minor extensions
of known results, but the method, which involves
introducing a different conformal structure on the
surface than the one induced by the first fundamental
form, is different from the one used by Hopf~\cite{MR707850}
and requires less technical results from the theory
of \textsc{pde} than Hopf's methods.

This is a \TeX ed version of a manuscript dating from early 1984.
It was never submitted for publication, 
though it circulated to some people 
and has been referred to from time to time
in published articles~(cf. \cite{MR1738404,MR1675186}).  
It is being provided now for the convenience of those 
who have asked for a copy.  
Except for the correction of various grammatical 
or typographical mistakes and infelicities and the addition 
of some (clearly marked) comments at the end of the introduction, 
the text is that of the original.
\end{abstract}

\subjclass{
 53A10, 
 53A05
}

\keywords{Weingarten surfaces, complex analysis, Hopf differentials}

\thanks{
This work was done in while the author was partially supported
by the NSF grant MC580-03237 and while the author was an
Alfred P. Sloan Fellow.  
}

\maketitle

\setcounter{section}{-1}
\setcounter{theorem}{-1}

\section{Introduction}\label{sec: intro}

Two of the most satisfying theorems in the differential geometry
of surfaces in~$\bbE^3$ are Hopf's Theorem, asserting that a 
two-sphere in~$\bbE^3$ of constant mean curvature is a round $2$-sphere,
and Liebmann's Theorem, asserting that a $2$-sphere in~$\bbE^3$
of constant Gaussian curvature is a round $2$-sphere. The usual 
proofs of these theorems are by quite different techniques.
Liebmann's Theorem is usually proved by assuming that the sphere
is not round and then doing local analysis at a point where the
difference of the principal curvatures is a maximum (see, for
example, O'Neill~\cite{MR0203595}).  The proof of Hopf's Theorem is less 
direct.  It involves treating~$S^2$ as a Riemann surface
and constructing a holomorphic quadratic differential on~$S^2$
from the second fundamental form of the immersion.

The original purpose of the investigations that led to this
paper was to give a proof of Liebmann's Theorem by Riemann 
surface theory.  To the author's surprise, a much more general
theorem developed:  If~$H$ and~$K$ represent the mean and
Gaussian curvatures of an immersion~$\xb:S^2\to\bbE^3$
and they satisfy a Weingarten relation of the form $H=f(H^2{-}K)$
where~$f$ is \emph{any} smooth function on an open interval
containing~$[0,\infty)$, then~$\xb(S^2)$ is a round sphere.
Note that Hopf's Theorem follows by taking~$f$ to be constant
and Liebmann's Theorem follows by taking~$f(x) = \sqrt{c+x}$
where~$c$ (necessarily positive) is the constant Gaussian 
curvature.

This theorem can also be generalized to immersions into other
space forms of dimension three.  Moreover, the hypothesis on
the form of the Weingarten relation can be weakened considerably.
(Note that \emph{some} hypothesis on the form of the Weingarten
relation is needed:  The ellipsoids of revolution are
non-round spherical Weingarten surfaces.)  Finally, the 
differentiability hypotheses can certainly be weakened, but we
leave this as an exercise for the interested reader and assume
that all given data are smooth for simplicity.

\emph{Added October, 2004:}  The reader may wonder why this
manuscript was never published.  The reason is that, after it
was finished, I realized that the main results were
essentially contained in those of Hopf and Alexandrov that
are described as Theorem~6.2 in Hopf's book~\cite{MR707850}.
However, in conversations with others over the intervening years, 
I have realized that the method introduced in this manucript,
that of considering holomorphic quantities with respect to
a Riemann surface structure different from that of the 
conformal structure induced by the first fundamental form,
has certain advantages and simplifications over the proofs
and techniques employed by Hopf.  Also, in the intervening
years, I have had several requests for copies of the old manuscript 
and some references to it have appeared in the literature.
Unfortunately, the old typescript is of poor quality
and hard to read.  Consequently, I have decided to make 
this \TeX ed version available.

\section{The moving frame and complex notation for surfaces in~$\bbE^3$}
\label{sec: movingframe}

We will assume that the reader is familiar with the moving frame
notation and the basic definitions of surface theory.  This
section is mainly to fix notation.  We fix an inner product
and orientation on~$\bbR^3$ and denote the resulting oriented
Euclidean space by~$\bbE^3$.

Let~$M^2$ be a smooth connected oriented surface and let~$\xb:M\to\bbE^3$
be a smooth immersion.  An \emph{adapted frame field} on an 
open set~$U\subseteq M$ will be a triple of smooth functions~$\eb_i:U\to\bbE^3$
($i=1,2,3$) with the property that for all~$p\in U$,
$\bigl(\eb_1(p),\eb_2(p),\eb_3(p)\bigr)$ is an oriented orthonormal
basis of~$\bbE^3$ and with the property that~$\eb_3(p)$ is
the oriented unit normal to~$\xb_\ast(T_pM)\subseteq\bbE^3$.
If~$\bigl(\eb^*_1,\eb^*_2,\eb^*_3\bigr)$ is any other
adapted frame field on~$U$, then there exists a unique smooth
function~$\theta:U\to\bbR/2\pi\bbZ$ for which
\be
\begin{aligned}
\eb^*_1 &= \phm\cos\theta\, \eb_1 + \sin\theta\,\eb_2\,,\\
\eb^*_2 &=   - \sin\theta\, \eb_1 + \cos\theta\,\eb_2\,,\\
\eb^*_3 &= \eb_3\,.\\
\end{aligned}
\ee
We say that~$\bigl(\eb^*_1,\eb^*_2,\eb^*_3\bigr)$
is the \emph{rotation of $\bigl(\eb_1,\eb_2,\eb_3\bigr)$
by~$\theta$}.

If~$\bigl(\eb_1,\eb_2,\eb_3\bigr)$ is an adapted frame field
on~$U\subseteq M$, we define the canonical forms~$\omega_i$,
$\omega_{ij}$ as usual by
\be
\omega_i = \eb_i\cdot\d \xb
\qquad\qquad
\omega_{ij} = \eb_i\cdot\d \eb_j\,.
\ee
As usual, we have the vector-valued $1$-form identities
\be
\d\xb = \eb_i\,\omega_i
\qquad\qquad
\d\eb_i = \eb_j\,\omega_{ji}\,,
\ee
as well as the structure equations
\be
\d\omega_i = -\omega_{ij}\w\omega_j
\qquad\qquad
\d\omega_{ij} = -\omega_{ik}\w\omega_{kj}\,.
\ee

Now, by definition, $\eb_3\cdot\d\xb = \omega_3 = 0$, so, 
by the structure equations,
\be
0 = -\d\omega_3 = \omega_{31}\w\omega_1 + \omega_{32}\w\omega_2\,.
\ee
Since~$\omega_1\w\omega_2$ is the oriented area form on~$U$ 
(and hence is not zero), Cartan's Lemma applies to show that
there are smooth functions~$h_{ij}=h_{ji}$ on~$U$ so that
\be
\omega_{3i} = h_{ij}\,\omega_j\,.
\ee
The eigenvalues of the matrix~$(h_{ij})$ are the
\emph{principal curvatures} of the immersion~$\xb$ (on the
open set~$U$).  They are independent of our choice of
framing~$\bigl(\eb_1,\eb_2,\eb_3\bigr)$.  Unfortunately,
they are not, in general, smooth functions on
a neighborhood of the umbilic locus (the closed subset of~$U$
where the eigenvalues are equal) since one must take a square
root to compute the eigenvalues.  On the other hand, the 
symmetric functions of the eigenvalues are smooth.  The
most common symmetric functions taken are
\be
H = {\ts\frac12}(h_{11}+h_{22})
\qquad\qquad
K = h_{11}h_{22}-{h_{12}}^2\,.
\ee
These are the mean and Gaussian curvatures, respectively.
One easily sees that the locus~$H^2-K=0$ is the umbilic locus.

An adapted frame field~$\bigl(\eb_1,\eb_2,\eb_3\bigr)$ is said
to be \emph{principal} if the matrix~$(h_{ij})$ is
diagonal, i.e., $h_{12}=0$.  Let us say that $\bigl(\eb_1,\eb_2,\eb_3\bigr)$
is \emph{positive principal} if~$h_{12}=0$
and~$h_{11}>h_{22}$.  At any given non-umbilic point~$p\in U$,
there will exist exactly \emph{two} positive principal
adapted frames, each being the rotation of the other by an
angle of~$\pi$.  Suppose that~$p_0\in U$ is an isolated
umbilic point.  We define the \emph{umbilic index} $\iota_\xb(p_0)$
at~$p_0$ as follows:  Let~$\gamma$ be a counterclockwise
loop around~$p_0$ that does not encircle any other umbilic points.
Let~$\iota_\xb(p_0)$ be the multiple of~$2\pi$ by which a positive
principal frame rotates (counterclockwise) as it is transported
around~$\gamma$.  Note that it is possible for $\iota_\xb(p_0)$
to be a \emph{half integer} 
(see Spivak~\cite[Chapter~$4$, Addendum~$2$]{MR532832}).
We have the classical result:

\begin{theorem}[Hopf]
Let~$M$ be compact and let~$\xb: M\to \bbE^3$ be an
immersion for which the umbilic locus~${\mathcal U}$ is finite. Then
\be
\chi(M) = \sum_{p\in{\mathcal U}} \iota_\xb(p).
\ee
\end{theorem}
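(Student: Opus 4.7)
My plan is to combine Stokes' theorem with the transformation rule $\omega_{12}^* = \omega_{12} - d\theta$ under a rotation of the adapted frame by an angle $\theta$, and to compare with the Gauss--Bonnet theorem $\int_M K\,dA = 2\pi\chi(M)$. The key geometric fact is that on $M\setminus\mathcal{U}$ the principal line field is smooth, and locally one can lift it to a positive principal frame, unique up to the rotation by $\pi$ that swaps the two choices at each point.

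The crucial preliminary observation is that, although a positive principal frame is only locally defined on $M\setminus\mathcal{U}$, its connection form $\omega_{12}^*$ is \emph{globally} well-defined there: under rotation by the constant angle $\pi$ the transformation rule gives $\omega_{12}^* \mapsto \omega_{12}^* - d(\pi) = \omega_{12}^*$, so the local definitions agree on overlaps. A direct computation from the structure equations and $\omega_{3i} = h_{ij}\omega_j$ gives $d\omega_{12}^* = K\,\omega_1\wedge\omega_2 = K\,dA$ on $M\setminus\mathcal{U}$.

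Now choose disjoint closed coordinate disks $D_j$ around each umbilic $p_j\in\mathcal{U}$, with boundary circles $\gamma_j$ oriented counterclockwise, and set $M_\epsilon = M\setminus\bigcup_j\mathrm{int}(D_j)$, so that $\partial M_\epsilon = -\bigsqcup_j\gamma_j$. On each $D_j$ pick a smooth adapted frame $(\eb_1,\eb_2,\eb_3)$, which exists since $D_j$ is contractible. On $D_j\setminus\{p_j\}$, the multi-valued rotation angle~$\theta$ from this frame to the positive principal frame has a well-defined, closed differential $d\theta$, and by the definition of umbilic index, $\oint_{\gamma_j}d\theta = 2\pi\,\iota_\xb(p_j)$. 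Applying Stokes' theorem on $M_\epsilon$ and using $\omega_{12}^* = \omega_{12} - d\theta$ on each punctured disk yields
$$\int_{M_\epsilon} K\,dA \;=\; \int_{M_\epsilon} d\omega_{12}^* \;=\; -\sum_j\oint_{\gamma_j}\omega_{12}^* \;=\; -\sum_j\oint_{\gamma_j}\omega_{12} \;+\; 2\pi\sum_j \iota_\xb(p_j).$$
Letting $\epsilon\to 0$, the left side tends to $2\pi\chi(M)$ by Gauss--Bonnet, while each $\oint_{\gamma_j}\omega_{12}\to 0$ because $\omega_{12}$ is smooth on the shrinking disk $D_j$; dividing by $2\pi$ gives the asserted identity.

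I expect the main obstacle to be precisely the preliminary well-definedness observation: once one checks that $\omega_{12}^*$ descends from local positive principal frames to a genuine 1-form on $M\setminus\mathcal{U}$ (and hence is available for Stokes' theorem on $M_\epsilon$), the rest is a routine limiting argument. The half-integrality of $\iota_\xb(p_j)$ enters nowhere explicitly — it is automatically accommodated by working with $d\theta$ rather than with $\theta$ itself.
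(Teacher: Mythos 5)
The paper does not actually prove this statement: it is quoted as ``the classical result'' of Hopf and used as a black box, so there is no internal proof to compare yours against. That said, your argument is correct and is the standard Gauss--Bonnet/connection-form proof of the Poincar\'e--Hopf theorem for the principal line field. The two points you flag as delicate do check out against the paper's conventions: since the two positive principal frames at a non-umbilic point differ by the \emph{constant} rotation $\pi$, the rule $\rho^*=\rho+\d\theta$ (equivalently $\omega_{12}^*=\omega_{12}-\d\theta$) shows the principal connection form glues to a global $1$-form on $M\setminus{\mathcal U}$; and because $\theta$ is defined only modulo $\pi$, the period $\oint_{\gamma_j}\d\theta$ is an integer multiple of $\pi$, which is exactly how the half-integer indices arise and is consistent with the paper's identity $\iota_\xb(p_0)=-\tfrac12\deg(z/|z|)$ (indeed $z/|z|=\E^{-2\iC\theta}$ for the angle to a positive principal frame). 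The sign bookkeeping ($\partial M_\epsilon=-\bigsqcup_j\gamma_j$, $\d\omega_{12}=K\,\omega_1\w\omega_2$, which matches the paper's $\d\rho=\tfrac{\iC}{2}\pi\w\bar\pi$) and the limiting step ($\oint_{\gamma_j}\omega_{12}\to0$ since $\omega_{12}$ extends smoothly across $p_j$) are all in order. The only dependency worth naming explicitly is that you take Gauss--Bonnet for closed surfaces as given, which is legitimate and not circular here.
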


Finally, in order to simplify our computations in the next
section, we introduce the \emph{complex notation} for an
adapted frame field~$\bigl(\eb_1,\eb_2,\eb_3\bigr)$ on~$U\subset M$.
We define the complex quantitites
\be
\begin{aligned}
\eb &= {\ts\frac12}(\eb_1 - \iC\,\eb_2)\\
\pi &= \omega_{31} - \iC\,\omega_{32}\\
z &= {\ts\frac12}(h_{11}-h_{22}) - \iC\,h_{12}
\end{aligned}
\qquad\qquad
\begin{aligned}
\omega &= \omega_1 + \iC\,\omega_2\\
\rho &= \omega_{21}\\
H &= {\ts\frac12}(h_{11}+h_{22}).
\end{aligned}
\ee
If~$\bigl(\eb^*_1,\eb^*_2,\eb^*_3\bigr)$ is the rotation
of~$\bigl(\eb_1,\eb_2,\eb_3\bigr)$ by~$\theta$, we easily compute
\be
\begin{aligned}
\eb^* &= \E^{\iC\theta}\eb\\
\pi^* &= \E^{\iC\theta}\pi\\
z^* &= \E^{2\iC\theta}z
\end{aligned}
\qquad\qquad
\begin{aligned}
\omega^* &= \E^{-\iC\theta}\omega\\
\rho^* &= \rho + \d\theta\\
H^* &= H.
\end{aligned}
\ee
In general, we say that a quantity~$\alpha$ computed with
respect to a frame field~$\bigl(\eb_1,\eb_2,\eb_3\bigr)$
has \emph{spin~$k$} if~$\alpha^* = \E^{\iC k\theta}\alpha$.
The quantities of spin zero are obviously independent of
the choice of frame field and hence are globally well defined
on~$M$.

Note that~$\bigl(\eb_1,\eb_2,\eb_3\bigr)$ is a (positive)
principal adapated framing iff $z$ is a (positive)
real function on~$U$.  In fact, the umbilic locus is defined
by~$z=0$ in this notation, while we have the important identity
\be
\iota_\xb(p_0) = -{\ts\frac12}\deg\bigl(z/|z|\bigr)
\ee
when $p_0$ is an isolated umbilic point, $\bigl(\eb_1,\eb_2,\eb_3\bigr)$
is a smooth adapted frame field on a neighborhood~$U$ of~$p_0$,
and~$\deg\bigl(z/|z|\bigr)$ is the degree of the smooth mapping
$z/|z|:\gamma\to S^1$ where $\gamma$ is a small loop that 
encircles~$p_0$ counterclockwise (and no other umbilics).

We shall also need the following structure equations (as well
as the fact that~$\omega\w\bar\omega\not=0$):
\be
\begin{aligned}
\d\omega &= -\iC\rho\w\omega\\
\d\pi &= \iC\rho\w\pi\\
\pi &= z\,\omega + H\,\bar\omega
\end{aligned}
\ee
We leave these as an exercise in complex notation for the reader.
Note that these equations are just the Codazzi equations.  We shall
not need the Gauss equation
\be
\d\rho = {\ts\frac\iC2}\,\pi\w\bar\pi
\ee
at all.  This will be useful in~\S\ref{sec: weingarteninconstcurv}
when we consider generalizations to other spaces of constant
curvature.

\section{A class of Weingarten equations}
\label{sec: weingarteneqs}

In this section, we prove our main theorem.  Let~$\xb:M^2\to\bbE^3$
be a smooth immersion of a smooth oriented surface into~$\bbE^3$.
let~$\bigl(\eb_1,\eb_2,\eb_3\bigr)$ be an adapted frame field on~$U
\subseteq M$.  If we substitute the equation~$\pi = z\,\omega + H\,\bar\omega$
into $\d\pi = \iC\rho\w\pi$ and expand, we get
\be
(\d z - 2\iC z\rho)\w\omega + \d H \w \bar\omega = 0.
\ee
Since $\omega\w\bar\omega\not=0$, it follows that there exist
smooth functions on~$U$, say, $u$ and~$v$, so that
\be
\begin{aligned}
\d z - 2\iC z\rho & = v\,\omega + u\,\bar\omega\\
\d H & = u\,\omega + \bar u\,\bar\omega.
\end{aligned}
\ee
Moreover, we also compute
\be
u^* = \E^{\iC\theta}u
\qquad\qquad
v^* = \E^{3\iC\theta}v.
\ee

Now let us suppose that~$\xb$ satisfies a Weingarten equation
of the form~$H = f(H^2-K)$ where~$f$ is a smooth function on 
the domain~$(-\epsilon,\infty)\subset\bbR$ where~$\epsilon>0$
is arbitrary.  Since~$H^2-K = z\bar z$ by definition, our relation
is written in the form~$H = f(z\bar z)$.  If we differentiate
this relation, we get
\be
u\,\omega + \bar u\,\bar\omega = \d H
= f'(z\bar z)\bigl(\bar z\,\d z + z\,\d\bar z\bigr)
= f'(z\bar z)\bigl(\bar z\,(v\,\omega+u\,\bar\omega)
                    + z\,(\bar v\,\bar\omega+\bar u\,\omega)\bigr).
\ee
Comparing coefficients of~$\omega$, we get the crucial relation
\be
u = f'(z\bar z)(\bar z v+ z\bar u).
\ee

We are also going to need two smooth functions~$F$ and~$G$
defined on~$\bbR$ with the following three properties
for all~$x\ge 0$:
\be
\begin{aligned}
\bigl(F(x)\bigr)^2 - x\bigl(G(x)\bigr)^2 &= 1,\\
2F'(x) &= f'(x) G(x),\\
2xG'(x) &= f'(x)F(x)-G(x).
\end{aligned}
\ee

We construct these functions as follows:  Consider the
smooth function~$\phi$ defined by
\be
\phi(r) = \int_0^r f'(s^2)\,\d s.
\ee
Obviously, $\phi(-r)= -\phi(r)$.  Using the substitution~$s=rt$,
we see that
\be
\phi(r) = r\int_0^1f'(r^2t^2)\,\d t,
\ee
so that~$\phi(r) = r\bar\phi(r)$ where~$\bar\phi$ is also smooth.
Using this, it is easy to see that there exist smooth functions~$F$
and~$G$ satisfying
\be
F(r^2) = \cosh\phi(r)
\qquad\qquad
G(r^2) = \frac{\sinh\phi(r)}{r}.
\ee
This uniquely specifies~$F$ and~$G$ for all~$x\ge 0$.  One
easily verifies that they have the three desired properties.  Now
consider the $1$-form on~$U$
\be
\sigma = F(z\bar z)\,\omega + G(z\bar z) \bar z\,\bar\omega.
\ee
We easily compute that~$\sigma^* = \E^{-\iC\theta}\sigma$ and
that
\be
\begin{aligned}
{\ts\frac\iC2}\,\sigma\w\bar\sigma
&= {\ts\frac\iC2}\,\bigl(F(z\bar z)\,\omega + G(z\bar z) \bar z\,\bar\omega)
\w \bigl(F(z\bar z)\,\bar\omega + G(z\bar z) z\,\omega\bigr)\\
&= {\ts\frac\iC2}\bigl(F(z\bar z)^2 - z\bar z\,G(z\bar z)^2\bigr)\,
      \omega\w\bar\omega\\
&= {\ts\frac\iC2}\,\omega\w\bar\omega = \omega_1\w\omega_2 > 0
\end{aligned}
\ee
by the first property of~$F$ and~$G$.  If we write~$\sigma = 
\sigma_1 + \iC\,\sigma_2$, then it follows that $\sigma_1$
and~$\sigma_2$ are independent on~$U$ and that the quadratic form
\be
\d s^2 = \sigma\circ\bar\sigma
= {\sigma_1}^2 + {\sigma_2}^2 = \sigma^* \circ \bar\sigma^*
\ee
is smooth, positive definite and globally well defined on~$M$.

It follows from the theorem of Korn and Lichtenstein on
isothermal coordinates (see Courant-Hilbert~\cite[Chapter VII, \S8]{MR1013360})
that there is a unique complex structure on~$M$ compatible
with the metric~$\d s^2$ and the 
orientation~${\ts\frac\iC2}\,\sigma\w\bar\sigma > 0$.
We endow~$M$ with this unique complex structure.  Note that
if~$\bigl(\eb_1,\eb_2,\eb_3\bigr)$ is any adapted frame field
on~$U\subseteq M$, then~$\sigma$ is of type~$(1,0)$ on~$U$
(by definition of the complex structure).

We now consider the quadratic form~$Q = z\,\sigma^2$ 
of type~$(2,0)$ on~$U$.  We compute
\be
Q^* = z^*\,\bigl(\sigma^*\bigr)^2 
= \E^{2\iC\theta}z\,\bigl(\E^{-\iC\theta}\sigma\bigr)^2
= z\,\sigma^2 = Q,
\ee
so~$Q$ has spin zero and hence is well defined globally on~$M$.
The following proposition is the heart of our results:

\begin{proposition}
\label{prop: Qishol}
$Q$ is a holomorphic quadratic form on~$M$.
\end{proposition}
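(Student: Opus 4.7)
The complex structure on~$M$ makes~$\sigma$ a local~$(1,0)$-form, so in a holomorphic coordinate~$\zeta$ we may write $\sigma = p\,\d\zeta$ with~$p$ smooth and nonvanishing (since $\tfrac\iC2\sigma\w\bar\sigma>0$). Then $Q = zp^2\,\d\zeta^2$, which is holomorphic exactly when $\d(zp^2)\w\d\zeta = 0$. Expanding and multiplying through by~$p$, this condition is equivalent to the frame-invariant $2$-form identity
\be
\d z\w\sigma + 2z\,\d\sigma = 0
\ee
on any open set carrying an adapted frame field. So the whole proposition reduces to verifying this single identity, using the structure equations together with the Weingarten hypothesis.

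The next step is direct expansion. Using $\d z = 2\iC z\rho + v\,\omega + u\,\bar\omega$ and its conjugate, the $\rho$-parts of $\d F(z\bar z) = F'(z\bar z)(\bar z\,\d z + z\,\d\bar z)$ cancel, giving $\d F = F'(B\,\omega + A\,\bar\omega)$ where $B := \bar z v + z\bar u$ and $A := \bar B$, and similarly for $\d G$. Substituting these together with $\d\omega = -\iC\rho\w\omega$ and $\d\bar\omega = \iC\rho\w\bar\omega$ into
\be
\d\sigma = \d F\w\omega + F\,\d\omega + \d(G\bar z)\w\bar\omega + G\bar z\,\d\bar\omega,
\ee
and combining with~$\d z\w\sigma$, the $\rho\w\omega$ and $\rho\w\bar\omega$ coefficients cancel automatically, without using the Weingarten relation. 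After applying the two identities $2F' = f'G$ and $2xG' = f'F - G$ (with $x := z\bar z$), the surviving $\omega\w\bar\omega$ coefficient collapses to
\be
C \;=\; f'F\bar z\,v \;-\; z^2 f'G\,\bar v \;-\; \bigl(F + xf'G\bigr)\,u \;+\; z\bigl(f'F + G\bigr)\,\bar u.
\ee

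At this point the Weingarten hypothesis enters: the relation $u = f'(\bar z v + z\bar u)$ and its conjugate rewrite as $f'\bar z v = u - f'z\bar u$ and $f'z\bar v = \bar u - f'\bar z u$. Substituting these into the first two terms of~$C$ produces
\be
C \;=\; (Fu - f'zF\bar u)\;+\;(-zG\bar u + xf'G\,u)\;-\;(F + xf'G)\,u\;+\;z(f'F + G)\,\bar u,
\ee
in which the coefficients of~$u$ and~$\bar u$ vanish separately, so $C = 0$ identically. The degenerate loci $\{f' = 0\}$ and $\{z = 0\}$ pose no difficulty: on either, the Weingarten relation forces $u = \bar u = 0$ and $C$ collapses at once. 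The main obstacle throughout is simply the bookkeeping: the final cancellation is forced by the precise way~$F$ and~$G$ were designed to match the function~$f'$, but the terms must be organized and tracked with care.
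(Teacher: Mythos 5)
Your proposal is correct and follows essentially the same route as the paper: reduce holomorphy of $Q$ to the frame-invariant identity $\d z\w\sigma+2z\,\d\sigma=0$, expand via the structure equations and the defining properties of $F$ and $G$, observe that the $\rho$-terms cancel, and kill the $\omega\w\bar\omega$ coefficient using $u=f'(\bar z v+z\bar u)$. Your surviving coefficient $C$ agrees exactly with the paper's after regrouping, and your closing remark about the loci $\{f'=0\}$ and $\{z=0\}$ is harmless but unnecessary, since the substitutions $f'\bar z v=u-f'z\bar u$ and $f'z\bar v=\bar u-f'\bar z u$ are algebraic identities requiring no division.
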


\begin{proof}
This will be a pure computation.  Let~$U\subseteq M$ be an open set
on which there exists a local holomorphic coordinate~$\zeta:U\to\bbC$
(clearly $M$ is covered by such open sets).  It is easy to see that
there is a unique adapted frame field~$\bigl(\eb_1,\eb_2,\eb_3\bigr)$
on~$U$ so that~$\sigma = \lambda\,\d\zeta$ where~$\lambda > 0$
is a positive real-valued smooth function on~$U$.  Then
\be
Q\,\vrule_{\,U} = (z\lambda^2)\,(\d\zeta)^2.
\ee
It suffices to show that $\partial(z\lambda^2)/\partial\bar\zeta \equiv 0$
on~$U$.  This is equivalent to
\be
\d(z\lambda^2)\w\d\zeta=0.
\ee
Now we expand this to
\be
\d(z\lambda^2)\w\d\zeta = \lambda(\d z\w\sigma + 2z\,\d\sigma)=0.
\ee
By the structure equations derived so far, we expand this last term
(writing~$F$, $F'$, etc., instead of~$F(z\bar z)$, $F'(z\bar z)$, etc.):
\begin{eqnarray*}
\d z\w\sigma + 2z\,\d\sigma 
&=& (2\iC z\rho+v\,\omega+u\,\bar\omega)\w (F\,\omega+G\bar z\,\bar\omega)\\
&&\qquad + 2z\bigl[F'\,(\bar z u+z \bar v)\,\bar\omega\w\omega 
                     -F\,\iC\rho\w\omega)\bigr]\\
&&\qquad +2z\bigl[\bar z G'(\bar z v + z\bar u)\,\omega\w\bar\omega\\
&&\qquad\qquad\qquad     +G\bar z(\iC\rho\w\bar\omega)
                     +G(-2\iC\bar z\rho +\bar u\,\omega)\w\bar\omega\bigr]\\
\noalign{\text{(note that all the terms containing~$\rho$ cancel)}}\\
&=&\bigl[-uF + \bar z v G - zf'G(\bar z u + z\bar v)\\
&&\qquad\qquad\qquad  + (f'F-G)(\bar z v + z\bar u) + 2z\bar uG\bigr]
                         \,\omega\w\bar\omega\\
\noalign{\text{(using~$u=f'(z\bar z)(\bar z v + z\bar u)$,
                     this becomes)}}\\
&=&\bigl[-uF+\bar z v G - z G\bar u + u F 
        - G(\bar z v + z \bar u) + 2 z \bar u G\bigr]\,\omega\w\bar\omega\\
&=& 0\qquad\text{(as desired).}
\end{eqnarray*}
\end{proof}

\begin{proposition}
\label{prop: umbilicornegindex}
Either~$\xb:M\to\bbE^3$ is totally umbilic or else the umbilic locus
consists entirely of isolated points of strictly negative index.
\end{proposition}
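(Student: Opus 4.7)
The plan is to combine Proposition~\ref{prop: Qishol} with the umbilic-index formula $\iota_\xb(p_0) = -\frac12\deg(z/|z|)$ recorded at the end of~\S\ref{sec: movingframe}. Since ${\ts\frac\iC2}\,\sigma\w\bar\sigma > 0$, the $(1,0)$-form $\sigma$ is nowhere zero on $M$, so the zero locus of the holomorphic quadratic form $Q = z\,\sigma^2$ coincides with the umbilic locus $\{z=0\}$. The argument then splits according to whether $Q$ vanishes identically on $M$: if $Q\equiv 0$, then $z\equiv 0$ on $M$ and $\xb$ is totally umbilic, which is the first alternative. Otherwise $Q$ is a non-trivial holomorphic section of $(T^{1,0}M)^{\otimes 2}$, which necessarily has only isolated zeros, so the umbilic locus is discrete and I am left to control the index at each such point.

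To pin down the index at an isolated umbilic~$p_0$, I would pick a local holomorphic coordinate $\zeta$ centered at $p_0$ together with the accompanying adapted frame field introduced in the proof of Proposition~\ref{prop: Qishol}, for which $\sigma = \lambda\,\d\zeta$ with $\lambda$ smooth and strictly positive. Then on this chart $Q = (z\lambda^2)\,(\d\zeta)^2$ with $z\lambda^2$ holomorphic and vanishing at $p_0$ to some finite order $k\ge 1$. On a small counterclockwise loop $\gamma$ around $p_0$ encircling no other umbilic, the argument principle applied to the holomorphic function $z\lambda^2$ shows that $(z\lambda^2)/|z\lambda^2|:\gamma\to S^1$ has degree $k$. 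Since $\lambda>0$ pointwise, this map coincides with $z/|z|$ along $\gamma$, and the index formula yields
\be
\iota_\xb(p_0) = -{\ts\frac12}\deg(z/|z|) = -{\ts\frac{k}{2}} \le -{\ts\frac12} < 0,
\ee
which is the required strict negativity.

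The only subtle point, more a conceptual check than a genuine obstacle, is the transition from the smooth quantity $z$ to the holomorphic quantity $z\lambda^2$. Although $z$ itself is not holomorphic, its argument along $\gamma$ agrees with that of $z\lambda^2$ because $\lambda$ is real and positive, so the winding of $z/|z|$ is controlled by the order of vanishing of the holomorphic coefficient of $Q$ in a holomorphic chart. Everything else is standard Riemann-surface theory combined with the holomorphicity of $Q$ established in Proposition~\ref{prop: Qishol}.
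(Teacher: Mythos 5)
Your proposal is correct and follows essentially the same route as the paper: split on whether the holomorphic form $Q$ vanishes identically, use the nonvanishing of $\sigma$ to identify the zero set of $Q$ with the umbilic locus, and compute the index via $\iota_\xb(p_0)=-\frac12\deg(z/|z|)$ in a frame with $\sigma=\lambda\,\d\zeta$, $\lambda>0$. The only cosmetic difference is that the paper normalizes the coordinate so that $Q=\zeta^k(\d\zeta)^2$, whereas you apply the argument principle directly to the holomorphic function $z\lambda^2$; these are interchangeable.
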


\begin{proof}
Because~$M$ is connected and~$Q$ is holomorphic on~$M$, either~$Q\equiv0$
or else~$Q$ has only isolated zeroes.  If~$Q\equiv0$ then~$z\,\sigma^2
\equiv0$ on each~$U\subseteq M$ with an adapted frame field
$\bigl(\eb_1,\eb_2,\eb_3\bigr)$.  Since~$\sigma^2\not=0$ on~$U$, it
follows that~$z\equiv0$, so that every point of~$U$ is umbilic.

Now suppose~$Q\not\equiv0$.  Then the zeroes of~$Q$ are isolated and
are clearly the umbilic points of the immersion~$\xb$.  Suppose that
$p_0$ is an umbilic of the immersion.  Then there exists an integer~$k>0$
and a holomorphic local coordinate~$\zeta:U\to \bbC$ with~$p_0\in U$
and~$\zeta(p_0)=0$ so that
\be
Q\,\vrule_{\,U} = \zeta^k\,(\d\zeta)^2.
\ee
(The proof is an elementary exercise in analytic function theory.)
We choose the frame field on~$U$ for which~$\sigma=\lambda\,\d\zeta$
with~$\lambda$ real and positive.  Then on~$U\setminus\{p_0\}$, we have
\be
\frac{z}{|z|} = \frac{\zeta^k/\lambda^2}{\bigl|\zeta^k/\lambda^2\bigr|}
= \frac{\zeta^k}{\bigl|\zeta^k\bigr|}.
\ee

Let~$\gamma$ be the counterclockwise loop~$|\zeta|=\delta>0$ where
$\delta$ is very small.  Obviously, the degree of the mapping
$\zeta^k/|\zeta^k|:\gamma\to S^1$ is~$k$.  Thus,~$\deg(z/|z|)=k$.
By our identity from~\S\ref{sec: movingframe}:
\be
\iota_\xb(p_0) = -k/2 <0.
\ee
\end{proof}

We will now prove our main theorem.

\begin{theorem}
\label{thm: main}
Let~$\xb:S^2\to\bbE^3$ be a smooth immersion that satisfies a
Weingarten equation of the form~$H = f(H^2-K)$ where~$f$ is
a smooth function on some interval~$(-\epsilon,\infty)$ where
$\epsilon>0$.  Then~$\xb(S^2)$ is a round $2$-sphere in~$\bbE^3$.
\end{theorem}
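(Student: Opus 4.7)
The plan is to combine Propositions~\ref{prop: Qishol} and~\ref{prop: umbilicornegindex} with Hopf's index theorem (Theorem~0 in~\S\ref{sec: movingframe}) and then invoke the classical classification of totally umbilic immersions into~$\bbE^3$.

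First, I would apply Proposition~\ref{prop: umbilicornegindex} directly to the given immersion~$\xb:S^2\to\bbE^3$. This yields a dichotomy: either~$\xb$ is totally umbilic on all of~$S^2$, or else the umbilic locus~${\mathcal U}\subset S^2$ consists of isolated points, each with \emph{strictly negative} index~$\iota_\xb(p)<0$.

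Second, I would rule out the non-trivial alternative via the Hopf index theorem. Since~$S^2$ is compact and the umbilics are isolated, ${\mathcal U}$ is finite, and the index formula gives
\be
2 \;=\; \chi(S^2)\;=\;\sum_{p\in{\mathcal U}}\iota_\xb(p)\;\le\;0,
\ee
a contradiction (the sum is~$0$ when~${\mathcal U}=\emptyset$ and strictly negative otherwise). Hence~$\xb$ must be totally umbilic on all of~$S^2$. One could equally well deduce this by observing that any complex structure on~$S^2$ admits no non-zero holomorphic quadratic differential, so Proposition~\ref{prop: Qishol} forces~$Q\equiv 0$ and hence~$z\equiv 0$.

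Finally, I would conclude by invoking the classical fact that a connected, totally umbilic immersed surface in~$\bbE^3$ lies either in a plane or in a round $2$-sphere; the compactness of~$S^2$ excludes the plane, leaving a round $2$-sphere. The substantive work is already done in the two preceding propositions---in particular in the choice of the new conformal structure~$\d s^2=\sigma\circ\bar\sigma$ that makes~$Q=z\,\sigma^2$ holomorphic---so the main theorem is essentially topological bookkeeping plus a standard fact. That standard classification itself follows quickly from the frame equations of~\S\ref{sec: movingframe}: if~$z\equiv 0$ then~$\pi=H\,\bar\omega$, applying~$\d$ forces~$\d H=0$, and then~$\d(\eb_3+H\,\xb)=0$ pins the image to a sphere (when~$H\ne 0$) or to a plane (when~$H=0$), with the latter excluded by compactness.
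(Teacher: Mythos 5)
Your proposal is correct and follows essentially the same route as the paper: apply Proposition~\ref{prop: umbilicornegindex}, rule out the non-umbilic alternative by comparing the negative umbilic indices with $\chi(S^2)=2$ via Hopf's index theorem, and conclude via the classification of totally umbilic surfaces. The only difference is that you spell out the final totally umbilic step (and note the alternative argument that $S^2$ carries no nonzero holomorphic quadratic differential), which the paper leaves implicit in the phrase ``we are done.''
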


\begin{proof}
If~$\xb:S^2\to\bbE^3$ is totally umbilic, we are done, so suppose
otherwise.  Then, by Proposition~\ref{prop: umbilicornegindex},
the umbilics of~$\xb$ form a finite set~${\mathcal U}\subset S^2$
and each umbilic has negative index.  However
\be
\sum_{p\in{\mathcal U}} \iota_\xb(p) = \chi(S^2) = 2 > 0
\ee
by Hopf's Theorem, which is a contradiction.
\end{proof}

\begin{corollary}[Hopf]
\label{cor: Hopf}
If~$\xb:S^2\to\bbE^3$ is an immersion with constant mean curvature,
then~$\xb(S^2)$ is a round sphere.
\end{corollary}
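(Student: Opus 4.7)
The plan is simply to recognize this as a direct specialization of Theorem~\ref{thm: main}. Suppose $\xb:S^2\to\bbE^3$ has constant mean curvature, so $H\equiv H_0$ for some constant $H_0\in\bbR$. Define $f:\bbR\to\bbR$ by $f(x)=H_0$, the constant function. Then $f$ is obviously smooth on every interval of the form $(-\epsilon,\infty)$, and the Weingarten identity $H = f(H^2{-}K)$ holds tautologically because both sides equal $H_0$ at every point.

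With the hypotheses of Theorem~\ref{thm: main} thus verified, I would simply invoke the theorem to conclude that $\xb(S^2)$ is a round $2$-sphere in $\bbE^3$.

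I do not anticipate any obstacle here; the whole content of the corollary has been absorbed into the proof of the main theorem. The only thing worth remarking, perhaps as a sanity check, is that in this constant-$H$ case the auxiliary function $\phi$ from \S\ref{sec: weingarteneqs} is identically zero (since $f'\equiv 0$), so $F\equiv 1$ and $G\equiv 0$; hence $\sigma=\omega$ and the conformal structure used to build the holomorphic quadratic differential $Q=z\,\sigma^2$ coincides with the usual conformal structure of the first fundamental form. In that specialization, $Q$ reduces to the classical Hopf differential $z\,\omega^2$, so this really is Hopf's original argument recovered as a particular case.
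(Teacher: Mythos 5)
Your proposal is correct and is essentially identical to the paper's proof, which simply takes $f\equiv\mathrm{const}$ and applies Theorem~\ref{thm: main}. Your added sanity check that $F\equiv 1$, $G\equiv 0$, and $Q=z\,\omega^2$ recovers the classical Hopf differential is accurate but not needed for the argument.
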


\begin{proof}
Merely take~$f\equiv$ const.
\end{proof}

\begin{corollary}
\label{cor: mustbeS2}
Suppose that~$M^2$ is a compact oriented surface and that~$\xb:M\to\bbE^3$
is a smooth immersion satisfying a Weingarten equation of the form
$H = f(H^2{-}K)$ where~$f$ is a smooth function 
on the interval~$(-\epsilon,\infty)$ $(\epsilon >0)$ and also satisfies
$f(x)^2 \ge x$ for all~$x\ge 0$.  Then $M$ is a $2$-sphere and 
$\xb(M)\subseteq\bbE^3$ is a round $2$-sphere.
\end{corollary}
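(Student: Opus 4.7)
The plan is to reduce to Theorem~\ref{thm: main} by proving that $M$ must be diffeomorphic to $S^2$. The additional hypothesis $f(x)^2\ge x$ for $x\ge 0$ exists precisely so that one can extract nonnegativity of the Gauss curvature from the Weingarten equation, after which Gauss--Bonnet settles the topological question.

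First, I would observe that $H^2-K = z\bar z$ is always nonnegative, by the complex notation of \S\ref{sec: movingframe}, so the Weingarten relation $H=f(H^2-K)$ is only ever evaluated at arguments $x\ge 0$. Squaring the Weingarten equation and invoking the hypothesis $f(x)^2\ge x$ immediately gives
\[
H^2 \;=\; f(H^2-K)^2 \;\ge\; H^2-K,
\]
i.e., $K\ge 0$ everywhere on~$M$.

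Next, I would establish that $K$ is strictly positive at some point. Since $\xb(M)$ is compact in $\bbE^3$, the smooth function $|\xb|^2$ on $M$ attains a maximum at some~$p_0$. At such a $p_0$, the position vector $\xb(p_0)$ is normal to $\xb_*(T_{p_0}M)$ and the Hessian of $|\xb|^2$ is negative semidefinite; the standard ``touching sphere'' argument then forces both principal curvatures at $p_0$ to have the same sign and absolute value at least $1/|\xb(p_0)|$, so that $K(p_0)>0$. Combined with Step~1, Gauss--Bonnet gives
\[
2\pi\,\chi(M) \;=\; \int_M K\,\d A \;>\; 0,
\]
so the compact oriented surface $M$ satisfies $\chi(M)=2$ and is therefore diffeomorphic to $S^2$. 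Theorem~\ref{thm: main} then completes the proof.

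No serious obstacle is expected. The only step needing more than a line of thought is the extraction of $K(p_0)>0$ at a maximum of $|\xb|^2$, which is classical; once it is in hand, the statement follows from Theorem~\ref{thm: main} by pure topology.
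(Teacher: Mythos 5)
Your proposal is correct and follows essentially the same route as the paper: use $f(x)^2\ge x$ to get $K = f(H^2-K)^2 - (H^2-K)\ge 0$, use compactness to find a point where $K>0$, conclude $\chi(M)>0$ by Gauss--Bonnet so $M=S^2$, and then apply Theorem~\ref{thm: main}. The only difference is that you spell out the touching-sphere argument for $K(p_0)>0$, which the paper leaves implicit.
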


\begin{proof}
Since~$K = H^2- (H^2-K) = \bigl(f(H^2-K)\bigr)^2 - (H^2-K) \ge 0$,
it follows that the induced metric on~$M$ has non-negative curvature.
Since~$M$ is compact, we must have~$K(p)>0$ for some~$p\in M$.  But
then, by Gauss-Bonnet, $\chi(M)>0$, so~$M= S^2$.  
Now Theorem~\ref{thm: main} applies.
\end{proof}

\begin{corollary}[Liebmann]
\label{cor: liebmann}
Suppose $M$ is compact and oriented and that~$\xb:M\to\bbE^3$
has constant positive Gaussian curvature $K_0>0$.
Then~$\xb(M)$ is a round $2$-sphere.
\end{corollary}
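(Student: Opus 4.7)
The plan is to reduce this to Corollary~\ref{cor: mustbeS2} by writing constant Gaussian curvature as a Weingarten relation of the form $H = f(H^2{-}K)$ with an $f$ satisfying the hypotheses of that corollary.

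First I would fix the sign of the mean curvature. Since $K\equiv K_0>0$ everywhere, at each point we have $H^2 = (H^2-K) + K_0 \ge K_0 > 0$, so $H$ is nowhere zero on~$M$. Because $H$ is continuous and $M$ is connected (a compact oriented surface admitting an immersion with $K>0$ is automatically a sphere by Gauss--Bonnet, but we do not need to invoke this yet), $H$ has a definite sign, and by reversing the orientation of~$M$ if necessary (which negates~$H$ but preserves~$K$), we may assume $H>0$ throughout.

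Next I would produce the function~$f$. With $H>0$ and $K\equiv K_0$ we have
\be
H = \sqrt{K_0 + (H^2-K)}\,,
\ee
so setting $f(x) = \sqrt{K_0+x}$ gives a smooth function on the open interval $(-K_0,\infty)$. Choosing any $\epsilon\in(0,K_0)$, the function $f$ is smooth on $(-\epsilon,\infty)\supset[0,\infty)$, and $\xb$ satisfies $H=f(H^2-K)$. Moreover, for every $x\ge 0$,
\be
f(x)^2 = K_0 + x \ge x,
\ee
so the inequality hypothesis of Corollary~\ref{cor: mustbeS2} is satisfied as well.

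Applying Corollary~\ref{cor: mustbeS2} now yields that $M=S^2$ and that $\xb(M)$ is a round $2$-sphere in $\bbE^3$, which is the conclusion of Liebmann's theorem. The only subtle point is the initial choice of orientation to ensure $H>0$ so that the square-root formula for $f$ is globally valid; everything else is an immediate substitution into the previous corollary.
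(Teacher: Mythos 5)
Your proof is correct and follows exactly the paper's route: the paper's entire proof is ``Apply Corollary~\ref{cor: mustbeS2} with $f(x)=\sqrt{K_0+x}$.'' The only difference is that you explicitly verify the point the paper leaves implicit, namely that $H$ is nowhere zero and the orientation can be chosen so that $H>0$, making the relation $H=\sqrt{K_0+(H^2-K)}$ valid globally; this is a worthwhile detail but not a different argument.
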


\begin{proof}
Apply Corollary~\ref{cor: mustbeS2} with~$f(x) = \sqrt{K_0 + x}$.
\end{proof}

Of course, we also get some information about more complicated surfaces:

\begin{theorem}
\label{thm: torus}
Let~$\xb:T^2\to\bbE^3$ be a smooth immersion of the torus~$T^2$
that satisfies a Weingarten equation of the form~$H=f(H^2-K)$
where~$f$ is smooth on some interval~$(-\epsilon,\infty)$ with
$\epsilon>0$.  Then~$\xb$ is free of umbilics and there is a 
global positive principal frame field on~$T^2$.
\end{theorem}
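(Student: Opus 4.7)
The plan is to combine the two structural results of this section (Propositions~\ref{prop: Qishol} and~\ref{prop: umbilicornegindex}) with Hopf's index theorem (Theorem~1) and the fact that the canonical bundle of any genus-one compact Riemann surface is holomorphically trivial.

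First I would rule out umbilic points on $T^2$. By Proposition~\ref{prop: umbilicornegindex}, either $\xb$ is totally umbilic or the umbilic set $\mathcal{U}$ is discrete, with every point of strictly negative index. Total umbilicity is incompatible with $M = T^2$: a totally umbilic immersion of a compact connected surface into $\bbE^3$ has image either an open-and-closed subset of a plane (impossible since the image would be compact, bounded, and open in $\bbE^2$) or all of a round sphere (forcing a covering $T^2 \to S^2$, contradicting $\pi_1(S^2) = 0$). Hence $\mathcal{U}$ is finite on compact $T^2$, and Theorem~1 gives
\[
0 = \chi(T^2) = \sum_{p \in \mathcal{U}} \iota_\xb(p).
\]
Since every summand is strictly negative, this forces $\mathcal{U} = \emptyset$.

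Next I would extract a global holomorphic square root of~$Q$. By Proposition~\ref{prop: Qishol} and the previous step, $Q$ is a nowhere-zero holomorphic quadratic differential on $T^2$, viewed as a Riemann surface with the complex structure introduced before Proposition~\ref{prop: Qishol}. On a genus-one compact Riemann surface the canonical bundle~$K$ is holomorphically trivial, so fixing a nowhere-zero holomorphic $1$-form~$\alpha$ on $T^2$, the ratio $Q/\alpha^2$ is a nowhere-zero holomorphic function on the compact torus, hence a nonzero constant $c \in \bbC^*$. Choosing a $\sqrt{c}$ yields a global holomorphic $1$-form $\beta := \sqrt{c}\,\alpha$ with $\beta^2 = Q$.

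Finally I would convert $\beta$ into the desired frame. On any patch $U$ equipped with an adapted frame, $Q = z\,\sigma^2$, so $w := \beta/\sigma$ is a smooth nowhere-zero function with $w^2 = z$; the transformation $\sigma^* = \E^{-\iC\theta}\sigma$ forces $w^* = \E^{\iC\theta}w$, i.e., $w$ has spin~$1$. The global frame is then defined by demanding that $w$ be real and positive at each point, achieved locally by rotating any given adapted frame by $-\arg(w)$; the spin-$1$ transformation of $w$ ensures that two such local constructions agree on overlaps and so assemble into a smooth global adapted frame. In it, $z = w^2 > 0$, so the frame is positive principal. The decisive step I expect to be the bundle-theoretic one in the second paragraph: the existence of a global square root of $Q$ relies essentially on triviality of the canonical bundle in genus one, and would fail in higher genus.
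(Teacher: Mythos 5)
Your proof is correct and follows essentially the same route as the paper: rule out total umbilicity, use $\chi(T^2)=0$ with the negative-index conclusion of Proposition~\ref{prop: umbilicornegindex} to show $Q$ is nowhere zero, extract a global holomorphic square root $\beta$ of $Q$, and rotate the frame so that $z>0$. The paper phrases the square-root step via the explicit uniformization $T^2\cong\bbC/\Lambda$ and a linear coordinate with $Q=(\d\zeta)^2$, which is the same fact as your canonical-bundle triviality argument.
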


\begin{proof}
The form~$Q$ constructed above cannot vanish identically on~$T^2$
since~$T^2$ obviously has no totally umbilic immersion into~$\bbE^3$.
Since~$\chi(T^2)=0$, it follows that~$Q$ has no zeroes at all.
It is well known that, as a Riemann surface,~$T^2$ must be
isomorphic to~$\bbC/\Lambda$ where~$\Lambda\subseteq\bbC$ is
a rank two discrete lattice (see Griffiths-Harris~\cite{MR1288523}).
Moreover, a linear coordinate~$\zeta$ can be chosen on~$\bbC$
so that~$\d\zeta$ is well defined and holomorphic on~$\bbC/\Lambda$
($\Lambda$ is the lattice of periods of~$\d\zeta$) and
so that~$Q = (\d\zeta)^2$.  This $\d\zeta$ is unique up to 
multiplication by~$\pm 1$.  We then choose the unique frame 
field for which~$\sigma = \lambda\,\d\zeta$ with~$\lambda$
real and positive.  Since~$Q = (\d\zeta)^2$, it follows
that~$z = \lambda^{-2}>0$, so this frame field is positive
and principal.
\end{proof}

We close this section with a couple of remarks.

The first remark is that \emph{some} hypothesis about the
Weingarten relation
$$
R(H,H^2{-}K)=0
$$ 
must be made in order
to deduce results about the umbilic locus corresponding to
Proposition~\ref{prop: umbilicornegindex}.  For example,
any surface of revolution is always a Weingarten surface and
the ellipsoids of revolution give examples of non-round
spherical  Weingarten surfaces.  Of course, the 
corresponding Weingarten relation cannot be solved smoothly
for~$H$ in terms of~$H^2-K$.  On the other hand, we could considerably
weaken our hypothesis and still have the conclusion of
Theorem~\ref{thm: main}.  For example, suppose $\xb:S^2\to\bbE^3$
is a smooth immersion such that, on a neighborhood of each
umbilic point~$p\in S^2$, $\xb$ satisfies a Weingarten relation
of the form~$H= f_p(H^2-K)$ where~$f_p$ is a smooth 
function on some interval~$(-\epsilon,\infty)$ where~$\epsilon>0$.
Here, $f_p$ can depend on~$p$.  Then we can still conclude that~$\xb$
is totally umbilic as follows:  
Applying Proposition~\ref{prop: umbilicornegindex} to~$\xb$
restricted to such a neighborhood of~$p$, we see that either~$p$
is an isolated umbilic of strictly negative index or else~$p$
has an open neighborhood consisting entirely of umbilics.  Obviously,
the non-isolated umbilics will then form an open and closed set.
Thus, if $\xb$ were not totally umbilic, the umbilic locus would
consist of isolated umbilics of negative index.  Since this latter
is impossible by Hopf's Theorem, we are done.

Perhaps the main interest in such an improvement of Theorem~\ref{thm: main}
comes from studying Weingarten relations that satisfy the solvability
hypothesis only locally.  For example, the relation $H^2 + (H^2-K)^2 = 1$
does not satisfy the hypothesis of Theorem~\ref{thm: main}, but at the
points where $H^2-K=0$ (i.e., the umbilic locus), we can solve for~$H$
smoothly, locally as~$H = \sqrt{1-(H^2-K)^2}$ or as $H = -\sqrt{1-(H^2-K)^2}$.
 From our above argument, it follows that an immersion~$\xb:S^2\to\bbE^3$
satisfying~$H^2 + (H^2-K)^2 = 1$ must be totally umbilic.

Our second remark concerns the nature of the equation $H=f(H^2-K)$
as a second order partial differential equation for the immersion
$\xb:M^2\to\bbE^3$.  If we suppose that $\xb$ satisfies $H=f(H^2-K)$
and define the function
\be
A = 4(H^2-K)\bigl(f'(H^2-K)\bigr)^2 \ge 0
\ee
on~$M$, then it can be shown that the linearization of the
above equation is elliptic on the regions where~$A<1$ and hyperbolic
on the regions where~$A>1$.  (The linearization is computed with
respect to \emph{normal} variations to avoid the degeneracies
of reparametrization.)  In particular, the equation $H=f(H^2-K)$
has an elliptic linearlization near the umbilic locus, since~$A$
vanishes on the umbilic locus.  Perhaps this accounts for the 
simple behavior of the umbilics.

What seems remarkable to this author is that the ``elliptic''
conclusion of Proposition~\ref{prop: Qishol} continues to hold even 
in the hyperbolic region, where~$A>1$.  This phenomenon of
a hyperbolic equation implying an elliptic one is surely unusual
and probably deserves further study.

\section{Weingarten surfaces in spaces of constant curvature}
\label{sec: weingarteninconstcurv}

We now consider the case of Weingarten immersions~$\xb:M^2\to N^3$
where~$N^3$ is a space of constant sectional curvature~$R$.  For 
simplicity, we assume that~$M^2$ and~$N^3$ are oriented.  An
adapted frame field on~$U\subseteq M^2$ will now be given by a
triple of smooth functions~$\eb_i: U\to TN^3$ with the property
that, for all~$p\in U$, $\bigl(\eb_1(p),\eb_2(p),\eb_3(p)\bigr)$
is an oriented orthonormal basis of~$T_{\xb(p)}N^3$ and
with the property that~$\eb_3(p)$ is the oriented unit normal
to~$\xb_*(T_pM)$.

The forms~$\omega_i$, $\omega_{ij}$ are defined by the equations
\be
\d\xb = \eb_i\,\omega_i
\qquad\qquad
\nabla\eb_i = \eb_j\,\omega_{ji}
\ee
where~$\nabla$ is the Levi-Civita connection.  The structure equations
are now (see Spivak~\cite{MR532832}):
\be
\d\omega_i = -\omega_{ij}\w\omega_j
\qquad\qquad
\d\omega_{ij} = -\omega_{ik}\w\omega_{kj} + R\,\omega_i\w\omega_j\,.
\ee
Again, we have~$\omega_3=0$ and consequently~$\omega_{3i}
= h_{ij}\,\omega_j$ ($h_{ij}=h_{ji}$).  The formulae
for the mean and Gaussian curvatures become
\be
H = {\ts\frac12}(h_{11}+h_{22})
\qquad\qquad
K = h_{11}h_{22} -{h_{12}}^2 + R.
\ee
As far as the complex notation goes, we define~$\omega$, $\pi$,
$\rho$, and~$z$ exactly as before.  We then verify that the structure
equations are
\be
\begin{aligned}
\d\omega &= -\iC\,\rho\w\omega\\
\d\pi &= \phm\iC\,\rho\w\pi
             \qquad\qquad\text{($\pi = z\,\omega + H\,\bar\omega$)}\\
\d\rho &= {\ts\frac\iC2}\bigl(\pi\w\bar\pi - R\,\omega\w\bar\omega\bigr).
\end{aligned}
\ee
Note that $z\bar z = H^2 - K + R$ and that the first two structure
equations are unchanged.  Since we did not use theformula for $\d\rho$
(i.e., the Gauss equation) in the proof of Proposition~\ref{prop: Qishol},
\S\ref{sec: weingarteneqs}, 
it follows that Proposition~\ref{prop: Qishol} remains
valid for immersions~$\xb:M^2\to N^3$ that satisfy an equation
of the form~$H = f(H^2-K+R)$ where~$f$ is a smooth function on
an interval~$(-\epsilon,\infty)$ ($\epsilon>0$).

This leads directly to the following theorem (we omit the proof):

\begin{theorem}
\label{thm: constcurveWr}
Let~$M^2$ be connected and let~$\xb:M^2\to N^3$ be
a smooth immersion where~$N^3$ has constant sectional curvature~$R$.
Suppose that every umbilic point~$p\in M$ has an open neighborhood
on which~$\xb$ satisfies a Weingarten equation of the form
$H=f_p(H^2-K+R)$ where~$f_p$ is a smooth function on a neighborhood
of~$0\in\bbR$.  Then either~$X$ is a totally umbilic immersion
or else each umbilic point is isolated and of strictly negative
index.  In particular, if~$M=S^2$ or~$\bbR\bbP^2$, then~$\xb$
is totally umbilic.
\end{theorem}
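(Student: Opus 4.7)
The plan is to localize the argument of Propositions~\ref{prop: Qishol} and~\ref{prop: umbilicornegindex} at each umbilic point and then stitch the local conclusions together by a connectedness argument on~$M$. The key observation is that the structure equations
\[
\d\omega = -\iC\rho\w\omega,\qquad \d\pi = \iC\rho\w\pi,\qquad \pi = z\,\omega + H\,\bar\omega
\]
are unchanged in constant sectional curvature, and only the identity $z\bar z = H^2 - K + R$ replaces the Euclidean $z\bar z = H^2 - K$. Since the proof of Proposition~\ref{prop: Qishol} never invoked the Gauss equation, the entire construction of $F$, $G$, $\sigma$ and the holomorphicity of $Q = z\,\sigma^2$ goes through verbatim on any open set where a relation $H = f(z\bar z)$ holds.

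Thus, for each umbilic $p \in M$, I would apply this construction on the neighborhood $U_p$ provided by the hypothesis, using $f_p$ to define $F_p$, $G_p$, $\sigma_p$ and hence a locally defined complex structure on $U_p$ together with a holomorphic quadratic differential $Q_p = z\,\sigma_p^2$. Arguing as in Proposition~\ref{prop: umbilicornegindex}, either $Q_p \equiv 0$ on $U_p$ (so $U_p$ is totally umbilic and $p$ lies in the interior of the umbilic locus) or $p$ is an isolated zero of $Q_p$ of some order $k > 0$, giving $\iota_\xb(p) = -k/2 < 0$.

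The main obstacle, and the only genuinely new step, is globalizing this local dichotomy: the complex structure on $U_p$ depends on $f_p$ and need not agree with the one on $U_q$ for a neighboring umbilic~$q$, so the $Q_p$ do not patch into a global object. I would handle this with a clopen argument. Let $\mathcal{U} \subseteq M$ be the umbilic locus (closed) and $\mathcal{U}^\circ$ its interior. Openness of $\mathcal{U}^\circ$ is immediate. For closedness, suppose $q$ is a limit point of $\mathcal{U}^\circ$; then $q \in \mathcal{U}$, so the hypothesis supplies a Weingarten neighborhood $U_q$ on which the local $Q_q$ is holomorphic. Since $Q_q$ vanishes on the nonempty open set $\mathcal{U}^\circ \cap U_q$, unique continuation of holomorphic quadratic differentials forces $Q_q \equiv 0$ on $U_q$, whence $U_q \subseteq \mathcal{U}$ and $q \in \mathcal{U}^\circ$. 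Connectedness of~$M$ then yields $\mathcal{U}^\circ = M$ (totally umbilic) or $\mathcal{U}^\circ = \emptyset$ (every umbilic isolated of strictly negative index).

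For the final assertion, the $S^2$ case follows at once from Theorem~0: $\chi(S^2) = 2$ cannot be expressed as a sum of strictly negative half-integers, so the umbilic locus cannot be a nonempty finite set, and total umbilicity is the only option. For $\bbR\bbP^2$, I would pass to the orientation double cover $S^2 \to \bbR\bbP^2$; the lifted immersion into~$N^3$ satisfies the same local Weingarten hypothesis (each umbilic downstairs lifts to a pair of umbilics with the same local relation), so it is totally umbilic by the $S^2$ case, and hence so is the original immersion.
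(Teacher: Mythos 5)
Your proposal is correct and is essentially the proof the paper intends: the paper omits the proof of this theorem but justifies it by exactly your two ingredients, namely the observation (stated just before the theorem) that Proposition~\ref{prop: Qishol} carries over because the Gauss equation was never used, and the localization-plus-clopen argument spelled out in the remark following Theorem~\ref{thm: torus} for the Euclidean case. Your handling of $\bbR\bbP^2$ via the orientation double cover (noting that the lifted relation may only agree up to replacing $f_p$ by $-f_p$ at one of the two preimages, which is harmless) is the natural way to supply the detail the paper leaves implicit.
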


\begin{remark}
This theorem includes many of the classical results about
Weingarten surfaces in spaces of constant curvature.  For 
example, one deduces immediately from Theorem~\ref{thm: constcurveWr}
Hopf's result that a sphere of constant mean curvature in a 
space form is totally umbilic and a generalization of Liebmann's
result that a sphere of constant curvature~$K_0\not=R$ in~$N^3$
is a round sphere.
\end{remark}

\bibliographystyle{hamsplain}

\providecommand{\bysame}{\leavevmode\hbox to3em{\hrulefill}\thinspace}

\end{document}